\newtheorem{theorem}{Theorem}[section]
\newtheorem{lemma}[theorem]{Lemma}
\newtheorem{prop}[theorem]{Proposition}
\newtheorem{cor}[theorem]{Corollary}
\newtheorem{conjecture}[theorem]{Conjecture}
\newtheorem{defn}{Definition}
\def \R{\mathbb{R}}
\def \e{\varepsilon}
\def \ph{\varphi}
\def \Om{\Omega}
\def \grad{\nabla}
\def \half{\frac{1}{2}}
\numberwithin{equation}{section}
\title{Inverse Radiative Transport with Local Data}
\author[Chung]{Francis J. Chung}
\address{Department of Mathematics, University of Kentucky, Lexington KY 40506, USA}
\email{fjchung@gmail.com}
\thanks{The author is supported in part by Simons Collaboration Grant 582020. This paper was inspired in part by the author's attendance at the KI-Net conference on forward and inverse kinetic theory organized by Qin Li at the University of Wisconsin Madison in October 2019. The author would also like to thank Plamen Stefanov for a helpful conversation.}
\begin{document}

\begin{abstract}
We consider an inverse problem for a radiative transport equation (RTE) in which boundary sources and measurements are restricted to a single subset $E$ of the boundary of the domain $\Om$.  We show that this problem can be solved globally if the restriction of the X-ray transform to lines through $E$ is invertible on $\Om$. In particular, if $\Om$ is strictly convex, we show that this local data problem can be solved globally whenever $E$ is an open subset of the boundary.  The proof relies on isolation and analysis of the second term in the collision expansion for solutions to the RTE, essentially considering light which scatters exactly once inside the domain.  
\end{abstract}

\maketitle

\section{Introduction}

The analysis of optical imaging in scattering media is a problem of great practical interest \cite{ArrSch}. This problem is usually analyzed under the assumption that optical data is available on the entire boundary of the domain being imaged. In many applications, however, it is often impossible or impractical to place sources and obtain measurements on the entire boundary. In this paper, we consider an optical imaging problem in a scattering domain, in which both the sources of light and the measurements are restricted to a single subset of the boundary. 

We analyze this problem in a scattering medium where light propagation can be modeled by a radiative transport equation (RTE).  To simplify things, we consider an equation of the type
\begin{equation}\label{RTE}
\theta \cdot \grad_x u(x,\theta) = \sigma(x) \left( \int_{S^{n-1}} u(x,\theta) d\theta - u(x,\theta) \right),
\end{equation}
on a smooth bounded domain $\Om \subset \R^n$, $n \geq 3$, with a single positive coefficient $\sigma \in C(\Om)$. Here $u : \Om \times S^{n-1} \rightarrow \R$ is the specific intensity of light, so  $u(x,\theta)$ represents the intensity of light at $x$ in direction $\theta$.  Equation \eqref{RTE} can be thought of as modeling light propagation in a domain dominated by isotropic scattering, with negligible absorption.  

If we adopt the notation
\[
\langle u \rangle (x) = \frac{1}{|S^{n-1}|}\int_{S^{n-1}} u(x,\theta) d\theta
\]
for the angular average, we can write our RTE \eqref{RTE} as
\begin{equation}\label{ShortRTE}
\theta \cdot \grad u = \sigma (\langle u \rangle - u).
\end{equation}

We define $\partial \Om_{\pm}$ by 
\[
\partial \Om_{\pm} = \{ (x,\theta) | x \in \partial \Om, \pm \nu(x) \cdot \theta > 0 \}
\]
where $\nu(x)$ is the outward normal at $x$. The following result guarantees existence and uniqueness of solutions to \eqref{ShortRTE}.

\begin{prop}\label{ExistUnique}
Suppose $f \in L^p(\partial \Om_-)$, with $1 \leq p \leq \infty$. Then there exists a unique $u \in L^p(\Om \times S^{n-1})$ such that
\begin{equation}\label{FullRTE}
\begin{split}
&\theta \cdot \grad u = \sigma (\langle u \rangle - u) \\
& u|_{\partial \Om_-} = f.  \\
\end{split}
\end{equation}
Moreover there exist constants $C_1, C_2$ depending only on $\Om$ and $\sigma$, and a trace operator defining $u|_{\partial \Om_+}$ such that 
\[
\|u\|_{L^p(\partial \Om_+)} \leq C_1\|u\|_{L^p(\Om \times S^{n-1})} \leq C_2\|f\|_{L^p(\partial \Om_-)}.
\]
\end{prop}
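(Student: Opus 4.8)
The plan is to recast the boundary value problem \eqref{FullRTE} as a second-kind integral equation and solve it by a collision (Neumann) expansion. Integrating \eqref{ShortRTE} along the characteristics of $\theta\cdot\grad$, which are the segments $s\mapsto x-s\theta$, and inserting the incoming data on $\partial\Om_-$, I would write $u = Kf + Au$, where $K$ lifts the data ballistically,
\[
(Kf)(x,\theta) = e^{-\int_0^{\tau_-(x,\theta)}\sigma(x-t\theta)\,dt}\, f(x-\tau_-(x,\theta)\theta,\theta),
\]
with $\tau_-(x,\theta)$ the backward exit time, and $A$ encodes one scattering followed by transport,
\[
(Au)(x,\theta) = \int_0^{\tau_-(x,\theta)} e^{-\int_0^s\sigma(x-t\theta)\,dt}\,\sigma(x-s\theta)\,\langle u\rangle(x-s\theta)\,ds.
\]
Existence, uniqueness, and the interior estimate then reduce to inverting $I-A$ on $L^p(\Om\times S^{n-1})$ and bounding $(I-A)^{-1}K$.

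The heart of the argument is an $L^\infty$ contraction estimate. With $M = \|\sigma\|_{L^\infty}\,\mathrm{diam}(\Om)<\infty$, every chord has optical depth at most $M$, and since $e^{-\int_0^s\sigma(x-t\theta)dt}\sigma(x-s\theta) = -\frac{d}{ds}e^{-\int_0^s\sigma(x-t\theta)dt}$, the $s$-integral telescopes:
\[
\int_0^{\tau_-} e^{-\int_0^s\sigma(x-t\theta)dt}\sigma(x-s\theta)\,ds = 1 - e^{-\int_0^{\tau_-}\sigma(x-t\theta)dt} \leq 1 - e^{-M}.
\]
Bounding $|\langle u\rangle|\leq\|u\|_{L^\infty}$ yields $\|A\|_{L^\infty\to L^\infty}\leq 1-e^{-M}<1$. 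Hence $I-A$ is invertible on $L^\infty$, the collision expansion $u = \sum_{k\geq 0}A^kKf$ converges there, and with the evident bound on $K$ this settles $p=\infty$. It also gives uniqueness for every $p$, since any $L^p$ solution of the homogeneous problem is a fixed point of $A$ that can be bootstrapped into $L^\infty$ (below) and then annihilated by the contraction.

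The main obstacle is the range $p<\infty$, and it is caused precisely by the conservative nature of the scattering: the $\sigma$ weight sits at the scattering point rather than at $x$, the telescoping fails, and a direct computation gives only $\|A\|_{L^1\to L^1}\leq M$, which is no contraction in an optically thick medium, so interpolation does not close by itself. To resolve this I would pass to the angular-averaged single-scattering operator $R$ on $L^p(\Om)$, defined by $Rg(x)=\langle J(\sigma g)\rangle(x)$, which satisfies $\langle Au\rangle = R\langle u\rangle$. Performing the substitution $y=x-s\theta$, i.e. $ds\,d\theta = |x-y|^{-(n-1)}\,dy$, turns the ray integral into a volume integral with kernel $\sim \sigma(y)\,a(x,y)\,|x-y|^{-(n-1)}$, where $0\le a\le 1$ is the line attenuation. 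Because $n-1<n$ and $\Om$ is bounded, this kernel is weakly singular, so $R$ is bounded, compact, and smoothing on $L^p(\Om)$. Consequently: (i) the nonzero spectrum of $A$ coincides with that of $R$, so by Fredholm theory invertibility of $I-A$ reduces to injectivity of $I-R$; and (ii) any fixed point $u=Au$ forces $g=\langle u\rangle$ to solve $g=Rg$, whereupon the smoothing of $R$ bootstraps $g$, hence $u=J(\sigma g)$, into $L^\infty$, where the contraction gives $u=0$. Interpolating the resulting endpoint bounds produces constants controlled by $M$ and the Riesz kernel, hence depending only on $\Om$ and $\sigma$ as required.

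Finally, the representation $u=Kf+Au$ shows $u$ is absolutely continuous along each characteristic, so the outgoing trace $u|_{\partial\Om_+}$ is well defined. To bound it I would multiply \eqref{ShortRTE} by $|u|^{p-1}\mathrm{sgn}(u)$, use $\theta\cdot\grad(|u|^p)=p|u|^{p-1}\mathrm{sgn}(u)\,\sigma(\langle u\rangle-u)$, and integrate over $\Om\times S^{n-1}$. The divergence theorem converts the left side into $\int_{\partial\Om_+}(\theta\cdot\nu)|u|^p - \int_{\partial\Om_-}|\theta\cdot\nu||f|^p$, while the right side is dominated by $\|\sigma\|_{L^\infty}\|u\|_{L^p(\Om\times S^{n-1})}^p$ via Hölder and $|\langle u\rangle|\leq\langle|u|\rangle$. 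Rearranging and invoking the interior estimate already obtained then gives $\|u\|_{L^p(\partial\Om_+)}\leq C_1\|u\|_{L^p(\Om\times S^{n-1})}\leq C_2\|f\|_{L^p(\partial\Om_-)}$.
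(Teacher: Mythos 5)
Your proposal is essentially sound, but note that the paper does not actually prove this proposition: it cites Eggers--Schlottbom and points forward to Proposition \ref{CollisionExpansion}, which establishes only the $L^\infty$ case. Your $L^\infty$ argument coincides exactly with that proof: your $A$ is the paper's $K=T^{-1}\sigma\langle\cdot\rangle$, your ballistic lift is the paper's $J$, and the telescoping bound $\|A\|_{L^\infty\to L^\infty}\le 1-e^{-M}<1$ is precisely the proof of \eqref{KBound}. What you add, and what the paper outsources, is the range $p<\infty$. Your diagnosis is correct that the Neumann series does not close there (in the conservative case $\|A\|_{L^1\to L^1}$ is only $O(\|\sigma\|_{\infty}\,\mathrm{diam}\,\Om)$), and your remedy is the standard velocity-averaging route: since $Au$ depends on $u$ only through $\langle u\rangle$, the operators $A=B\circ\langle\cdot\rangle$ and $R=\langle\cdot\rangle\circ B$ share nonzero spectrum; polar coordinates exhibit $R$ as a weakly singular operator with kernel $\lesssim\sigma(y)|x-y|^{1-n}$, hence bounded, compact and smoothing on $L^p(\Om)$; Fredholm theory reduces invertibility of $I-A$ to injectivity of $I-R$; and injectivity follows by iterating $g=Rg$ to lift a fixed point into $L^\infty$ (each application gains nearly $1/n$ in the Lebesgue exponent, so finitely many steps suffice) and then invoking the $L^\infty$ contraction. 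This is in substance the argument of the cited reference, so your write-up buys a self-contained proof where the paper settles for a citation plus the $L^\infty$ special case.

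Two points deserve care. First, the boundedness of $J$ from $L^p(\partial\Om_-)$ into $L^p(\Om\times S^{n-1})$ is ``evident'' only when $\partial\Om_\pm$ carries the measure $|\theta\cdot\nu(x)|\,d\mu(x)\,d\theta$ (the change of variables $x=z+t\theta$ produces exactly this Jacobian); this is also the measure your divergence-theorem identity yields on $\partial\Om_+$, so both the trace estimate and the statement of the proposition must be read with that weighted norm, as in Eggers--Schlottbom. Second, the chain rule $\theta\cdot\grad(|u|^p)=p|u|^{p-1}\mathrm{sgn}(u)\,\theta\cdot\grad u$ and the Green identity require justification for solutions that are merely $L^p$ with $\theta\cdot\grad u\in L^p$; this is standard trace theory for the transport operator but is not free. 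Neither point is a gap in the logic, only in the bookkeeping.
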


For a proof, see \cite{EggSch} (also Proposition \ref{CollisionExpansion} below).  Proposition \ref{ExistUnique} allows us to define the albedo operator 
\[
\mathcal{A}: L^1(\partial \Om_-) \rightarrow L^1(\partial \Om_+)
\]
by
\[
\mathcal{A}(f) = u|_{\partial \Om_+}.
\]
The standard problem of optical tomography in this setting, solved in \cite{ChoSte}, is to determine $\sigma$ from knowledge of the full map $\mathcal{A}$. 

To define our \emph{local data} problem, in which only a subset of the boundary of $\Omega$ is accessible for placing sources and taking measurements, suppose $E \subset \partial \Om$, and define
\[
E_{\pm} = \{ (x,\theta) | x \in E, \pm \nu(x) \cdot \theta \geq 0 \}.
\]
We define a new map
\[
\mathcal{A}|_E: L^1(E_-) \rightarrow L^1(E_+)
\]
by 
\[
\mathcal{A}|_E(f) = u|_{E_+}
\]
for $f \in L^1(\partial \Om)$ supported in $E_-$.  

We want to know if knowledge of $\mathcal{A}|_E$ determines $\sigma$.  To state our result, we make the following definition regarding the invertibility of the restriction of the X-ray transform to $E$.

\begin{defn}\label{Scannability}
Suppose that $E \subset \partial \Om$. We say that $\Om$ can be \emph{scanned} from $E$ if for $\ph \in C(\Om)$, the condition that 
\[
\int_{\ell} \ph \,d \ell = 0 \mbox{ for each line segment } \ell \subset \bar{\Om} \mbox{ with } E \cap \ell \neq \varnothing
\]
implies that $\ph \equiv 0$.
\end{defn}

The main result of this paper is the following.

\begin{theorem}\label{MainThm}
Suppose $E \subset \partial \Om$, and $\Om$ can be scanned from $E$. Then $\mathcal{A}|_E$ determines $\sigma$ in the sense that if $\sigma_1, \sigma_2 \in C(\Om)$ are positive functions producing corresponding maps $\mathcal{A}_1|_E, \mathcal{A}_2|_E$, then 
\[
\mathcal{A}_1|_E = \mathcal{A}_2|_E \mbox{ implies } \sigma_1 = \sigma_2.
\]
\end{theorem}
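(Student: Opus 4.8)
The plan is to construct solutions of \eqref{FullRTE} through the collision expansion and then to isolate and read off the single-scattering part of the albedo kernel. Write $Tu = \theta\cdot\grad u + \sigma u$ for the transport-with-attenuation operator and $Ku = \sigma\langle u\rangle$ for the (isotropic) scattering operator, so that \eqref{FullRTE} becomes $Tu = Ku$, $u|_{\partial\Om_-}=f$. Inverting $T$ with vanishing incoming data and iterating gives $u = \sum_{k\ge 0} u_k$ with $u_k = (T^{-1}K)^k u_0$, where $u_0$ is the purely ballistic solution $Tu_0 = 0$, $u_0|_{\partial\Om_-}=f$; I expect the convergence and mapping properties of this series to be exactly the content of Proposition \ref{CollisionExpansion}. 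The albedo kernel splits correspondingly as $\alpha = \sum_k \alpha_k$, and since $\mathcal A|_E$ is the restriction of this kernel to $(x',\theta')\in E_-$ and $(x,\theta)\in E_+$, the data determines $\sum_k \alpha_k$ on that set. The term $u_1 = T^{-1}Ku_0$ models light that enters at some $x'\in E$, travels ballistically to an interior point $y$, scatters once isotropically (with strength $\sigma(y)$), and travels ballistically to the exit $x\in E$.

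The first key step is to isolate $\alpha_1$ from the full kernel by a singularity analysis in the spirit of \cite{ChoSte}. For a fixed outgoing $(x,\theta)\in E_+$, the ballistic kernel $\alpha_0$ is supported at the single incoming point $(x-\tau\theta,\theta)$ and is the most singular contribution, whereas $\alpha_1$ is supported on the lower-dimensional set of incoming $(x',\theta')$ whose ray meets the backward ray of $(x,\theta)$. This is where the hypothesis $n\ge 3$ is used: there the orders of singularity of $\alpha_0$, of $\alpha_1$, and of the remainder $\sum_{k\ge 2}\alpha_k$ are strictly separated, the last being the most regular, so that $\mathcal A|_E$ determines the leading singular coefficient of $\alpha_1$ on its support. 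A computation along the broken ray $x'\to y\to x$ should show this coefficient to be, up to a geometric factor depending only on $\Om$ and the three points,
\[
\Phi(x',y,x)=\sigma(y)\,\exp\Big(-\int_{[x',y]}\sigma\,d\ell-\int_{[y,x]}\sigma\,d\ell\Big),
\]
where $y$ is the intersection of the two rays. I expect this isolation---and in particular its uniformity as the broken ray degenerates toward a straight line, where the support of $\alpha_1$ approaches that of $\alpha_0$---to be the \emph{main obstacle} of the proof.

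Granting the extraction, suppose $\mathcal A_1|_E=\mathcal A_2|_E$. Since the geometric factor is common to $\sigma_1$ and $\sigma_2$, we get $\Phi_1(x',y,x)=\Phi_2(x',y,x)$ for every admissible broken ray with $x',x\in E$. Now fix $x'\in E$ and an inward direction $\theta'$, and let the exit point $x$ tend to $x'$, so that the outgoing direction tends to $-\theta'$ and the exit leg $[y,x]$ tends to $[y,x']$. In the limit the identity becomes
\[
\sigma_1(y)\,e^{-2\int_{[x',y]}\sigma_1\,d\ell}=\sigma_2(y)\,e^{-2\int_{[x',y]}\sigma_2\,d\ell}
\]
for every $y$ on the ray from $x'$. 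Writing $A_i(t)=\int_0^t \sigma_i(x'+r\theta')\,dr$, this says $\frac{d}{dt}e^{-2A_1(t)}=\frac{d}{dt}e^{-2A_2(t)}$; integrating from the common value $A_i(0)=0$ forces $A_1\equiv A_2$, that is $\int_{[x',y]}\sigma_1=\int_{[x',y]}\sigma_2$ for every $y$ on every ray issuing from $x'\in E$.

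Since every line segment $\ell\subset\bar\Om$ with $\ell\cap E\neq\varnothing$ is a union of at most two subsegments, each having an endpoint in $E$, the previous step gives $\int_\ell(\sigma_1-\sigma_2)\,d\ell=0$ for all such $\ell$. As $\sigma_1-\sigma_2\in C(\Om)$, the hypothesis that $\Om$ can be scanned from $E$ then yields $\sigma_1-\sigma_2\equiv 0$, proving the theorem. (Differentiating $A_1\equiv A_2$ also recovers $\sigma_1=\sigma_2$ pointwise at every interior point reached by such a ray, which in the strictly convex case already exhausts $\Om$.)
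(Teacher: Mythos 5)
Your overall strategy --- collision expansion, isolation of the single-scattering term, degeneration of the broken ray to backscattering along a chord, then the X-ray transform plus scannability --- is the right one, and your final identity $\sigma_i(y)e^{-2A_i(t)}=-\tfrac12\frac{d}{dt}e^{-2A_i(t)}$ is exactly the integrand the paper ends up with. But the step you yourself flag as the ``main obstacle'' is a genuine gap, and it is not merely a technical uniformity issue: your plan extracts the pointwise coefficient $\Phi(x',y,x)$ of the single-scattering kernel on non-degenerate broken rays (two distinct boundary points $x',x\in E$, with $y$ the unique intersection of the two legs) and then passes to the limit $x\to x'$. At the backscattering configuration the two legs coincide, so there is no distinguished intersection point $y$; the support of $\alpha_1$ collapses onto that of $\alpha_0$, the singularity-order separation you invoke for $n\ge 3$ degenerates, and the single-scattering contribution to the measurement is no longer a pointwise quantity in $y$ but an integral over the whole chord. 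So the limit identity ``$\sigma_1(y)e^{-2\int_{[x',y]}\sigma_1}=\sigma_2(y)e^{-2\int_{[x',y]}\sigma_2}$ for every $y$'' does not follow from the extraction as you set it up; you would also need $E$ to contain exit points accumulating at $x'$ in the right directions, which the bare hypothesis of the theorem (scannability) does not give you.

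The paper's proof is built precisely to avoid this. It never extracts $\Phi$ pointwise: it takes a source $\psi_h$ concentrated at $(x,\theta)\in E_-$ and measures at the \emph{same} point in the reversed direction, $(x,-\theta)\in E_+$. There the ballistic term vanishes identically, $J\psi_h(x,-\theta)=0$ (no separation of singularities needed), the multiple-scattering remainder is $O(h^{2-n})$ by Lemma \ref{SmallRemainder}, and the single-scattering term integrates in closed form over the scattering point along the chord:
\[
KJ\psi_h(x,-\theta)=h^{1-n}\int_0^{\|x-x_{\theta+}\|}\alpha^2(x,x+t\theta)\,\sigma(x+t\theta)\,dt+o(h^{1-n})
=\tfrac12 h^{1-n}\bigl(1-\alpha^2(x,x_{\theta+})\bigr)+o(h^{1-n}),
\]
which is exactly the integrated form of your differential identity and directly yields $\int_{[x,x_{\theta+}]}\sigma$ for every chord through a point of $E$. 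To repair your argument you would either have to carry out the Choulli--Stefanov-type kernel analysis with uniform control up to the degenerate configuration (a substantial piece of analysis you have only conjectured), or restructure it as the paper does, working at the backscattering configuration from the outset with an approximate-identity source. As written, the proposal identifies the correct target but does not prove the step on which everything else depends.
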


The following corollary gives a stronger but more easily checked condition on $E$ for $\mathcal{A}|_E$ to determine $\sigma$, showing that Theorem \ref{MainThm} applies to a wide range of cases. 

\begin{cor}\label{ConvexCor}
Suppose $\Om$ is strictly convex, and $E$ is an open subset of $\partial \Om$.  Then $\mathcal{A}|_E$ determines $\sigma$.
\end{cor}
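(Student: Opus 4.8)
The plan is to deduce Corollary~\ref{ConvexCor} from Theorem~\ref{MainThm}: it suffices to show that a strictly convex $\Om$ can be scanned, in the sense of Definition~\ref{Scannability}, from any open $E\subset\partial\Om$. First I would record the geometric simplification that strict convexity provides. Since the boundary of a strictly convex domain contains no line segment, any segment $\ell\subset\bar\Om$ meets $\partial\Om$ only at its two endpoints, so $\ell\cap E\neq\varnothing$ exactly when one endpoint of $\ell$ lies in $E$. Thus the task reduces to the following statement about the restricted X-ray transform: if $\ph\in C(\Om)$ satisfies $\int_\ell\ph\,d\ell=0$ for every chord $\ell$ of $\Om$ with at least one endpoint in $E$, then $\ph\equiv0$. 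Throughout I would extend $\ph$ by zero to a compactly supported bounded (hence distributional) function on $\R^n$, so that the classical Helgason support theorem is available.

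The first substantive step is to obtain vanishing of $\ph$ in a region adjacent to $E$. Writing $D=\partial\Om\setminus E$ for the closed inaccessible part of the boundary, I would choose a compact convex set $K\subset\bar\Om$ that contains a relative neighborhood of $D$ in $\bar\Om$ (poking slightly into $E$ across the rim, which is permissible since $E$ is open). Then any line that misses $K$ but meets $\Om$ is a chord whose endpoints avoid a neighborhood of $D$ and hence lie in $E$; such chords belong to the data set, so the line integral of the extended $\ph$ vanishes on every line missing $K$. The support theorem then forces $\ph\equiv0$ on the nonempty open set $\bar\Om\setminus K$, which touches $E$.

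The heart of the argument, and the step I expect to be the main obstacle, is to propagate this vanishing across all of $\Om$, since a single convex body $K$ only reaches a neighborhood of $E$: points from which $E$ is merely partially visible are not covered. Here I would exploit the overdeterminacy of the X-ray transform for $n\ge3$. The transform is an elliptic Fourier integral operator with real-analytic kernel, so by the analytic-microlocal (Boman--Quinto) theory the vanishing of the data over chords with an endpoint in $E$ controls the analytic wavefront set of $\ph$ at every $(x,\xi)$ for which the hyperplane through $x$ with conormal $\xi$ meets $E$. Consequently $\ph$ is real-analytic on the open set $\Om_0=\{x\in\Om:\text{every hyperplane through }x\text{ meets }E\}$, and combining real-analyticity with the vanishing already established near $E$ and unique continuation gives $\ph\equiv0$ on $\Om_0$.

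To finish I would run a sweeping (layer-stripping) argument to enlarge the vanishing set beyond $\Om_0$. Once $\ph$ is known to vanish on an open set adjacent to $E$, each chord issuing from a point of $E$ contributes no mass until it leaves that set, so the one-endpoint data may be reinterpreted as data with effective sources on the moving inner front; a continuity argument, open-and-closed in the sweep parameter and using the local support and microlocal results above at each stage, then extends the vanishing set. Verifying that this sweep genuinely exhausts $\Om$, rather than stalling at the shadow boundary of $E$, is the delicate point, and it is precisely strict convexity, which guarantees that every interior point is joined by an open cone of chords reaching $E$, that I would rely on to push it through. With $\ph\equiv0$ established, $\Om$ is scannable from $E$ and Theorem~\ref{MainThm} yields the corollary.
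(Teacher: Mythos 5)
Your reduction to scannability and the observation that, by strict convexity, a segment in $\bar\Om$ meets $E$ exactly when an endpoint does are both correct, and your first step (Helgason's support theorem applied to lines missing a compact convex set containing a neighborhood of $\partial\Om\setminus E$) legitimately yields vanishing of $\ph$ on a collar adjacent to $E$. But the proof has a genuine gap where you yourself locate the difficulty: the propagation from that collar to all of $\Om$. The microlocal step is not right as stated. The set $\Om_0=\{x:\text{every hyperplane through }x\text{ meets }E\}$ is typically empty when $E$ is a small boundary patch and $x$ is deep in $\Om$ (for a ball with $E$ a small cap, no interior point outside a thin neighborhood of the cap has this property), and in any case the analytic wavefront set detected by the X-ray transform is governed by the \emph{lines} in the data complex, not hyperplanes; establishing analytic-microlocal uniqueness for a restricted line complex is precisely the hard content of the Boman--Quinto results, which require admissibility hypotheses you have not verified. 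The concluding ``sweep'' is then asserted rather than proved, and you explicitly concede that showing it does not stall at the shadow boundary of $E$ is the delicate point. As written, the argument establishes vanishing only near $E$.

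The paper closes this gap with a single external input that makes the propagation unnecessary: the Hamaker--Smith--Solomon--Wagner theorem (Proposition \ref{XRayInversion}), which says that if $\ph\in L^1(\Om)$ and $T(\ph)(\ell)=0$ for \emph{all} lines $\ell$ through some open set $V$ disjoint from the closure of the convex hull of $\Om$, then $\ph\equiv 0$ on all of $\Om$ --- a global uniqueness statement, not merely a support statement outside a convex set. Given this, the only thing to check is geometric: for strictly convex $\Om$ and $p\in E$, a small ball $V$ placed just outside $\Om$ below the tangent plane at $p$ has the property that every line through $V$ meeting $\bar\Om$ enters $\bar\Om$ through $E$ (this follows from compactness of the relevant direction set and the implicit function theorem applied to the strictly convex boundary graph near $p$). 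All such lines are in the data set, so Proposition \ref{XRayInversion} finishes the proof in one step. I recommend you replace your steps two and three with an appeal to this theorem; alternatively, if you want to salvage your route, you would need to actually prove a propagation-of-vanishing result for the one-endpoint-in-$E$ line complex, which is substantially harder than the corollary itself.
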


It is natural to ask if the result in Theorem \ref{MainThm} can be generalized to remove the scannability condition.

\begin{conjecture}\label{LocalConjecture}
Suppose $E \subset \partial \Om$ is open.  Then $\mathcal{A}|_E$ determines $\sigma$.
\end{conjecture}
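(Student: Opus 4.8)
The plan is to analyze the collision (Neumann) expansion $u = \sum_{k\ge 0} u_k$ for the solution of \eqref{FullRTE}, where $u_0$ is the ballistic (unscattered) term solving $\theta\cdot\grad u_0 + \sigma u_0 = 0$ with $u_0|_{\partial \Om_-}=f$, and each $u_{k}$ solves $\theta\cdot\grad u_k+\sigma u_k = \sigma\langle u_{k-1}\rangle$ with zero incoming data. Correspondingly the Schwartz kernel of $\mathcal{A}|_E$ splits as $\alpha = \alpha_0+\alpha_1+\mathcal{R}$, where $\alpha_0$ is the ballistic kernel, $\alpha_1$ the single-scattering kernel, and $\mathcal{R}$ collects the terms with $k\ge 2$. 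The first point to record is that $\alpha_0$ alone is insufficient: as a distribution in $(x,\theta)$ it is supported on $\{\theta=\theta',\ x=\text{exit point of the ray from }(y,\theta')\}$, so it is visible in $E_+$ only when that exit point also lies in $E$. Thus the ballistic term records $\int_\ell \sigma$ only along chords $\ell$ with \emph{both} endpoints in $E$, which is strictly weaker than the scannability hypothesis. The entire purpose of isolating single scattering is to let light re-emerge in $E$ after having probed the hidden part of a chord that meets $E$ in only one endpoint.

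First I would write $\alpha_0$ and $\alpha_1$ explicitly and compare their singularities. For a point source at $(y,\theta')$ the average $\langle u_0\rangle$ is a density on the incoming ray of size $\sim|z-y|^{-(n-1)}e^{-\int_{[y,z]}\sigma}$, and integrating the single-scattering source along the outgoing characteristic shows that $\alpha_1\bigl((x,\theta),(y,\theta')\bigr)$ is singular exactly where the backward ray from $(x,\theta)$ meets the incoming ray from $(y,\theta')$, with leading strength
\begin{equation}\label{eq:single}
J(y,\theta',x,\theta)\,\sigma(z)\,\exp\left(-\int_{[y,z]}\sigma-\int_{[z,x]}\sigma\right),
\end{equation}
where $z$ is the (generically unique) intersection point and $J$ is an explicit geometric factor depending only on $\Om$ and the two rays. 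Because the single-scattering interaction lives where two rays meet, in dimension $n\ge 3$ the kernels $\alpha_0$, $\alpha_1$, and $\mathcal{R}$ have strictly decreasing orders of singularity, so $\alpha_1$, and hence the quantity \eqref{eq:single}, can be extracted from knowledge of $\mathcal{A}|_E$. I expect this separation to be the main technical obstacle, precisely because it must be carried out from the \emph{local} kernel on $E_+\times E_-$ rather than the full boundary: one has to verify that the relevant single-scattering singularities fall inside the accessible region and are masked neither by the more singular $\alpha_0$ nor by the multiply scattered remainder $\mathcal{R}$.

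With \eqref{eq:single} available the goal is to convert it into X-ray data along chords meeting $E$. The cleanest device is to specialize to near-backscattering: fix $y\in E$ and a direction $\theta'$ pointing into $\Om$, let $\ell$ be the resulting chord with far endpoint $w$, and consider outputs near $(y,-\theta')\in E_+$. For exact backscatter the backward detector ray coincides with the incoming ray, so every point $z=y+t\theta'$ of $\ell$ contributes and the outgoing attenuation equals the incoming one; the singular part of the measurement then assembles into
\[
\int_0^{T} J(t)\,\sigma(y+t\theta')\,e^{-2\int_0^{t}\sigma(y+r\theta')\,dr}\,dt ,
\]
and since $\sigma(y+t\theta')\,e^{-2\int_0^{t}\sigma}$ is, up to the known factor $J$, a perfect $t$-derivative, this integral determines $\int_\ell\sigma$ for the \emph{full} chord $\ell$, including the portion reaching $w\notin E$. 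As $y$ ranges over $E$ and $\theta'$ over all inward directions, one recovers $\int_\ell\sigma$ for every line segment meeting $E$; applying this to $\sigma_1,\sigma_2$ with $\mathcal{A}_1|_E=\mathcal{A}_2|_E$ gives $\int_\ell(\sigma_1-\sigma_2)=0$ for all such $\ell$, whereupon Definition \ref{Scannability} forces $\sigma_1-\sigma_2\equiv 0$. The delicate points to discharge are the justification of the backscatter limit, since the output trace is only an $L^p$ function and \eqref{eq:single} must be tested against approximations concentrating at backscatter while controlling the possible degeneration of $J$ there, and, as a fallback should exact backscatter prove too singular, its replacement by ratios of \eqref{eq:single} over pairs of incoming rays sharing the scattering point $z$, which cancel $\sigma(z)$ and the outgoing attenuation and yield the differences $\int_{[y_1,z]}\sigma-\int_{[y_2,z]}\sigma$ that integrate to the same conclusion.
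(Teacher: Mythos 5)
The statement you are addressing is Conjecture \ref{LocalConjecture}, which the paper leaves \emph{open}; it is not Theorem \ref{MainThm}. Your argument --- isolate the single-scattering contribution, evaluate it at near-backscatter for a source point $y \in E$ and incoming direction $\theta'$, observe that $\sigma(y+t\theta')e^{-2\int_0^t \sigma}$ is a perfect $t$-derivative, and thereby recover $\int_\ell \sigma$ for every chord $\ell \subset \bar{\Om}$ meeting $E$ --- is in outline exactly the paper's proof of Theorem \ref{MainThm} (there it is implemented with approximate identities $\psi_h$ concentrated at $(x,\theta)\in E_-$ and the measurement $u(x,-\theta)$, and your perfect-derivative step is precisely \eqref{BoundaryInterpretation}). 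The fatal gap is your final sentence, ``whereupon Definition \ref{Scannability} forces $\sigma_1-\sigma_2\equiv 0$.'' Definition \ref{Scannability} is a \emph{definition} of a property that the pair $(\Om,E)$ may or may not possess, not a theorem you can invoke; the entire content of Conjecture \ref{LocalConjecture} is that the scannability hypothesis is dropped, and the domain is not even assumed convex. For a non-convex $\Om$ with $E$ a small open patch, there are in general points of $\Om$ lying on no line segment contained in $\bar{\Om}$ that meets $E$, so the collection of line integrals you recover carries no information about $\sigma$ near those points, and the restricted X-ray transform is simply not injective. Your argument proves nothing beyond Theorem \ref{MainThm}.

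The deeper issue is structural: every piece of data you extract --- the ballistic kernel, the single-scattering amplitude at backscatter, and your fallback ratios over pairs of incoming rays sharing a scattering point $z$ --- probes $\sigma$ only along straight segments in $\bar{\Om}$ reachable from $E$. The multiply-scattered remainder $\mathcal{R}$, which is the only part of the solution that can visit a region hidden from $E$ and return, is treated in your scheme purely as a lower-order error to be discarded. As Section 4 of the paper explains, any proof of Conjecture \ref{LocalConjecture} must extract usable information from exactly those non-ballistic terms (``seeing around corners''), and no mechanism for doing so appears in your proposal. What you have written is a reasonable alternative derivation of the already-proved scannable case, not a proof of the conjecture.
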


A similar conjecture in the highly-scattering limit, where light propagation can be modeled by an elliptic equation, has been the subject of much work; see \cite{KenSal} for a survey.  However, this author is not aware of similar efforts in the transport case. 

The remainder of this paper is organized as follows.  Notation and preliminary results are presented in Section 2. 
The proofs of Theorem \ref{MainThm} and Corollary \ref{ConvexCor} are given in Section 3.  Finally, Section 4 is devoted to comments on Conjecture \ref{LocalConjecture}.  

\section{Preliminaries}

We want to analyze solutions to the RTE in terms of the collision expansion. To describe this expansion, we introduce the following notation.

If $x, y \in \bar{\Om}$, such that the line segment joining $x$ and $y$ is contained in $\Om$, we define the attenuation factor
\begin{equation}\label{OpticalDistance}
\alpha(x,y) = \exp\left(- \int_0^{\|y-x\|}\sigma(x + s(\widehat{y-x})) \, ds\right),
\end{equation}
where $\widehat{y-x}$ is the unit vector in the direction of $y-x$. We note that $\alpha$ is symmetric in $x$ and $y$, and for $t \geq 0$ 
\begin{equation}\label{OpticalDistanceDerivative}
\partial_t \alpha(x, x+ t\theta) = -\sigma(x + t\theta)\alpha(x, x+ t\theta).
\end{equation}
Roughly speaking $\alpha(x,y)$ represents the attenuation experienced by light traveling from $x$ to $y$ in the absence of the $\langle u \rangle$ term in the RTE. This notion can be made precise in the following two senses:

First, it follows from \eqref{OpticalDistanceDerivative} that the solution to the transport equation
\begin{equation}\label{JEquation}
\begin{split}
\theta \cdot \grad u +\sigma u &= 0 \\
u|_{\partial \Om_-} &= f \\
\end{split}
\end{equation}
is given by 
\begin{equation}\label{JDefn}
u(x,\theta) = Jf(x,\theta) := \alpha(x, x_{\theta-})f(x_{\theta-}, \theta),
\end{equation}
where $x_{\theta \pm}$ is the first point on $\partial \Om$ on the ray from $x$ in direction $ \pm \theta$. 

Second, the solution to the transport equation
\begin{equation}\label{TEquation}
\begin{split}
\theta \cdot \grad u + \sigma u &= S(x,\theta)\\
u|_{\partial \Om_-} &= 0 \\
\end{split}
\end{equation}
is given by 
\begin{equation}\label{TInverseDefn}
u(x,\theta) = T^{-1}S(x,\theta) := \int_0^{\|x - x_{\theta-}\|} \alpha(x, x - t\theta)S(x - t\theta, \theta)dt.
\end{equation}
Note that since $\alpha < 1$ and $\|x - x_{\theta-}\|$ is uniformly bounded on $\Om$, 
\begin{equation}\label{TInverseBound}
\|T^{-1} S\|_{L^{\infty}(\Om \times S^{n-1})} \lesssim \|S\|_{L^{\infty}(\Om \times S^{n-1})}.
\end{equation}

With this notation we have the following proposition.

\begin{prop}\label{CollisionExpansion}
Define the operator
\[
K(u) = T^{-1}\sigma\langle u \rangle.
\]
Then there exists $0 < C < 1$ such that
\begin{equation}\label{KBound}
\|K\|_{L^{\infty}(X \times {S^{n-1}}) \rightarrow L^{\infty}(X \times {S^{n-1}})} < C, 
\end{equation}
and if $u$ solves the RTE
\begin{equation}\label{BoundaryRTE}
\begin{split}
&\theta \cdot \grad u = \sigma (\langle u \rangle - u) \\
& u|_{\partial \Om_-} = f,  \\
\end{split}
\end{equation}
for some $f \in L^{\infty}(\partial \Om_{-})$, then $u$ takes the form
\begin{equation}\label{CollisionExp}
u = (1 + K + K^2 + \ldots)Jf.
\end{equation}
\end{prop}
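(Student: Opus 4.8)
The plan is to prove the two assertions in turn: first the operator bound \eqref{KBound}, which makes the Neumann series in \eqref{CollisionExp} converge, and then the identity exhibiting any solution of \eqref{BoundaryRTE} as a fixed point of the affine map $w \mapsto Jf + Kw$. Granting $\|K\| < C < 1$, the operator $I - K$ is boundedly invertible on $L^{\infty}(\Om \times S^{n-1})$ with inverse $\sum_{k \geq 0} K^k$, so once the fixed-point equation $u = Jf + Ku$ is established the expansion \eqref{CollisionExp} follows at once.

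For the bound I would estimate $K(u)(x,\theta)$ pointwise. Unwinding the definitions of $K$ and of $T^{-1}$ from \eqref{TInverseDefn} gives
\[
K(u)(x,\theta) = \int_0^{\|x - x_{\theta-}\|} \alpha(x, x - t\theta)\,\sigma(x - t\theta)\,\langle u \rangle(x - t\theta)\, dt.
\]
Since $\langle u \rangle$ is an angular average, $|\langle u \rangle| \leq \|u\|_{L^{\infty}(\Om \times S^{n-1})}$, so this factor may be pulled out in absolute value. The key observation is that, by \eqref{OpticalDistanceDerivative} applied with $-\theta$ in place of $\theta$, the remaining integrand is an exact derivative, $\alpha(x, x - t\theta)\sigma(x - t\theta) = -\partial_t \alpha(x, x - t\theta)$. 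The integral therefore telescopes to $\alpha(x,x) - \alpha(x, x_{\theta-}) = 1 - \alpha(x, x_{\theta-})$, yielding
\[
|K(u)(x,\theta)| \leq \bigl(1 - \alpha(x, x_{\theta-})\bigr)\,\|u\|_{L^{\infty}(\Om \times S^{n-1})}.
\]
Because $\sigma$ is bounded above on $\Om$ and the chord lengths $\|x - x_{\theta-}\|$ are at most $\operatorname{diam}\Om$, the attenuation factor satisfies $\alpha(x, x_{\theta-}) \geq e^{-\|\sigma\|_{\infty} \operatorname{diam}\Om} > 0$ uniformly, so $C := 1 - e^{-\|\sigma\|_{\infty}\operatorname{diam}\Om} < 1$ serves as the required constant.

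To obtain the fixed-point equation I would rewrite \eqref{BoundaryRTE} by moving the $-\sigma u$ term to the left, so that $u$ solves $\theta \cdot \grad u + \sigma u = \sigma \langle u \rangle$ with $u|_{\partial \Om_-} = f$. This splits into the two model problems already solved: $Jf$ from \eqref{JDefn} solves \eqref{JEquation}, i.e. $\theta \cdot \grad(Jf) + \sigma Jf = 0$ with $(Jf)|_{\partial \Om_-} = f$, so the difference $v := u - Jf$ satisfies $\theta \cdot \grad v + \sigma v = \sigma \langle u \rangle$ with $v|_{\partial \Om_-} = 0$. By the representation \eqref{TInverseDefn} for \eqref{TEquation} this forces $v = T^{-1}(\sigma \langle u \rangle) = Ku$, that is $u = Jf + Ku$. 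Applying $(I-K)^{-1} = \sum_{k \geq 0} K^k$ to $Jf$ then gives \eqref{CollisionExp}.

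The main obstacle is less the algebra than securing the uniform constant $C < 1$: this requires $\sigma$ to be bounded above on $\Om$, rather than merely continuous on the open set, so that $\alpha(x, x_{\theta-})$ stays bounded away from $0$ uniformly in $(x,\theta)$. A secondary point is justifying the manipulations for $L^{\infty}$ solutions rather than classical ones. I would handle this by working throughout with the representation formulas: $T^{-1}$ maps $L^{\infty}(\Om \times S^{n-1})$ to itself by \eqref{TInverseBound}, the identity $v = Ku$ is read directly off \eqref{TInverseDefn}, and the contraction principle delivers a unique $L^{\infty}$ fixed point, which must coincide with the solution furnished by Proposition \ref{ExistUnique}.
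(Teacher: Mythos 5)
Your proof is correct and follows essentially the same route as the paper's: the fixed-point identity $u = Jf + Ku$ obtained by observing that $u - Jf$ solves \eqref{TEquation} with source $\sigma\langle u\rangle$, followed by the contraction bound via the telescoping identity $\alpha(x,x-t\theta)\sigma(x-t\theta) = -\partial_t\alpha(x,x-t\theta)$ from \eqref{OpticalDistanceDerivative}. If anything, your justification that $1-\alpha(x,x_{\theta-})$ stays uniformly below $1$ (via an upper bound on $\sigma$ and on the chord lengths, giving $\alpha \geq e^{-\|\sigma\|_\infty \operatorname{diam}\Om}$) is more precise than the paper's, which attributes this to $\sigma$ being uniformly positive.
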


We refer to the series \eqref{CollisionExp} as the collision expansion of $u$. The proof is essentially the proof of Theorem 1.1 in \cite{EggSch}, but with a more explicit series form for $u$ (compare e.g. to the existence results in \cite{ChoSte, ChuSch, DauLio}, although note that those have slightly different hypotheses). We include the full proof here for completeness, using our notation. 

\begin{proof}
We can rewrite \eqref{BoundaryRTE} as 
\begin{equation*}
\begin{split}
&(\theta \cdot \grad + \sigma)u = \sigma \langle u \rangle \\
& u|_{\partial \Om_-} = f.  \\
\end{split}
\end{equation*}
Since $Jf$ solves \eqref{JEquation}, we have
\begin{equation*}
\begin{split}
&(\theta \cdot \grad + \sigma)(u - Jf) = \sigma \langle u \rangle \\
& (u - Jf)|_{\partial \Om_-} = 0.  \\
\end{split}
\end{equation*}
Therefore $u - Jf$ solves \eqref{TEquation} with $S = \sigma \langle u \rangle$, and so
\[
u - Jf = T^{-1}\sigma \langle u \rangle.
\]
Rearranging, we have
\[
(I - K)u = Jf.
\]
Equation \eqref{KBound} implies that the Neumann series for $(I-K)^{-1}$ converges, which gives \eqref{CollisionExp}.

To prove \eqref{KBound}, note that explicitly
\[
Ku(x,\theta) = \int_0^{\|x - x_{\theta-}\|} \alpha(x, x - t\theta)\sigma(x - t\theta)\langle u\rangle (x - t\theta) dt .
\]
Since $\sigma$ and $\alpha$ are nonnegative, we have
\[
|Ku(x,\theta)| \leq \|u\|_{L^{\infty}(\Om \times S^{n-1})}\int_0^{\|x - x_{\theta-}\|} \alpha(x, x - t\theta)\sigma(x - t\theta) dt
\]
Equation \eqref{OpticalDistanceDerivative} implies that 
\[
\alpha(x, x - t\theta)\sigma(x - t\theta) = -\partial_t \alpha(x, x - t\theta),
\]
so 
\[
|Ku(x,\theta)| \leq \|u\|_{L^{\infty}(\Om \times S^{n-1})}(1 - \alpha(x, x_{\theta-})).
\]
Since $\sigma$ is uniformly positive, $(1 - \alpha(x, x_{\theta-}))$ is uniformly less than $1$, so
\[
|Ku(x,\theta)| \leq C\|u\|_{L^{\infty}(\Om \times S^{n-1})}
\]
with $0<C<1$ as desired. This finishes the proof of \eqref{KBound} and therefore the proposition.
\end{proof}

In the spirit of \cite{ChoSte}, we intend to focus (no pun intended) on highly singular boundary conditions for the RTE, concentrated at a point $(x_0,\theta_0) \in E_-$. We follow the approach of \cite{ChuSch} in using approximations to identity, and define $\psi_h: \partial \Om_- \rightarrow \R$ to be the product 
\[
\psi_h(x,\theta) = X_h(x)\Theta_h(\theta)
\]
where $X_h \in C(\partial \Om)$ and $\Theta_h \in C(S^{n-1})$ are nonnegative, supported only for $|x-x_0| \leq h$ and $|\theta - \theta_0| \leq h$, and normalized so $\|X_h\|_{C(\partial \Om)}, \|\Theta_h\|_{C(S^{n-1})} = O(h^{n-1})$ and
\[
\int_{\partial \Om} X_h = \int_{S^{n-1}} \Theta_h  = 1.
\]
Note that this implies that $X_h$ and $\Theta_h$ are approximations of identity in the sense that 
\begin{equation}\label{SeparatedApproximations}
\int_{\partial \Om} X_h \ph = \ph(x_0,\theta) + o_h(1) \mbox{ and } \int_{S^{n-1}} \Theta_h \ph = \ph(x,\theta_0) + o_h(1)
\end{equation}
for each $\ph \in C(\partial \Om \times S^{n-1})$ bounded uniformly in $h$. It follows that 
\[
\int_{\partial \Om \times S^{n-1}} \psi_h \ph = \ph(x_0,\theta_0) + o_h(1).
\]
For sufficiently small $h$, $\psi_h$ has support contained in $E_-$, so we can think of $\psi_h$ as a function in $L^p(E_-)$.  With the definition of this boundary condition, we are ready to tackle the proof of Theorem \ref{MainThm}.  

\section{Proofs of the Main Results}

We want to consider the solution to the equation
\begin{equation}\label{SingularSolution}
\begin{split}
&\theta \cdot \grad u = \sigma (\langle u \rangle - u) \\
& u|_{\partial \Om_-} = \psi_h  \\
\end{split}
\end{equation}

Since $K$ is an integrating operator, integrating over one spatial dimension and all $n-1$ dimensions of angular variables, later terms in the collision expansion \eqref{CollisionExp} tend to be smoother than earlier ones.  In particular, we have the following lemma for the tail of \eqref{CollisionExp} in the solution to \eqref{SingularSolution}.

\begin{lemma}\label{SmallRemainder}
Suppose $u$ solves \eqref{SingularSolution}. Then
\[
u = J\psi_h + KJ\psi_h + R
\]
where 
\[
\|R\|_{L^{\infty}(\Om \times S^{n-1})} = O(h^{2-n}).
\]
\end{lemma}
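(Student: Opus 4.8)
The plan is to recognize that $R$ is exactly the tail of the collision expansion \eqref{CollisionExp}. Since $u=\sum_{k\ge 0}K^kJ\psi_h$, we have $R=u-J\psi_h-KJ\psi_h=\sum_{k\ge 2}K^kJ\psi_h=K^2u$, so the whole problem reduces to estimating $\|K^2u\|_{L^\infty(\Om\times S^{n-1})}$. Writing $K^2u=K(Ku)=T^{-1}\sigma\langle Ku\rangle$ and invoking \eqref{TInverseBound}, it suffices to prove $\|\langle Ku\rangle\|_{L^\infty(\Om)}=O(h^{2-n})$. The point is that this is a full power of $h$ smaller than the size $O(h^{1-n})$ of the once-scattered term $KJ\psi_h$; that extra gain is the smoothing produced by a \emph{second} angular average, and capturing it quantitatively is the heart of the matter.

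First I would convert the angular average of $K$ into a purely spatial integral operator. Passing to polar coordinates $z=x-t\theta$ in the explicit formula for $Ku$ gives
\[
\langle Ku\rangle(x)=\frac{1}{|S^{n-1}|}\int_{\Om}\frac{\alpha(x,z)\sigma(z)}{|x-z|^{n-1}}\,\langle u\rangle(z)\,dz=:P\langle u\rangle(x),
\]
a Newtonian-type operator. Applying $P$ to an angularly constant function $\tilde f(x,\theta)=f(x)$ shows $Pf=\langle K\tilde f\rangle$, so \eqref{KBound} is inherited by $P$, giving $\|P\|_{L^\infty(\Om)\to L^\infty(\Om)}\le C<1$. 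Since $u=J\psi_h+Ku$ yields $\langle u\rangle=\langle J\psi_h\rangle+P\langle u\rangle$, setting $g:=\langle J\psi_h\rangle$ I obtain $(I-P)\langle Ku\rangle=Pg$, hence $\langle Ku\rangle=(I-P)^{-1}Pg$ and $\|\langle Ku\rangle\|_{L^\infty}\le(1-C)^{-1}\|Pg\|_{L^\infty}$. The contraction property of $P$ is exactly what lets me close the estimate without needing circular control of the full field $\langle u\rangle$.

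The problem is thereby reduced to the single key estimate $\|Pg\|_{L^\infty}=O(h^{2-n})$, for which I would use two facts about $g=\langle J\psi_h\rangle$. Because $\int_{S^{n-1}}\Theta_h=1$, bounding $X_h$ by its sup norm gives $\|g\|_{L^\infty}\lesssim\|X_h\|_{C(\partial\Om)}\sim h^{1-n}$; and the constraints $|\theta-\theta_0|\le h$ and $|z_{\theta-}-x_0|\le h$ confine the support of $g$ to a tube $\mathcal T$ of radius $\sim h$ about the ray $\ell_0$ issuing from $x_0$ in direction $\theta_0$. It then remains to prove the potential estimate $\sup_x\int_{\mathcal T}|x-z|^{1-n}\,dz\lesssim h$, which I would carry out in cylindrical coordinates adapted to $\ell_0$ (axial variable $s$, transverse variable of dimension $n-1$), splitting the axial integral at $|s-s_x|=h$. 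The near-diagonal piece is an $n$-dimensional integral of $|w|^{1-n}$ over a ball of radius $h$, contributing $O(h)$; the far piece contributes $h^{n-1}\int_h^{L}s^{1-n}\,ds\sim h$. Combining, $\|Pg\|_{L^\infty}\lesssim h^{1-n}\cdot h=h^{2-n}$, and the lemma follows.

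The main obstacle is this potential estimate together with the geometric claim that $g$ concentrates on a tube of radius $h$: this is where the extra power of $h$ is manufactured, and it is the quantitative form of the heuristic that a second scattering event smears the singular incoming beam into something an order milder. I also expect the far part of the potential integral to be precisely where the hypothesis $n\ge 3$ enters, since $\int_h^L s^{1-n}\,ds$ is $O(h^{2-n})$ only when $n-1>1$; in dimension two it would be logarithmic and the stated rate would degrade.
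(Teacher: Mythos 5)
Your proof is correct, and it reaches the same two milestones as the paper's argument --- the reduction of the tail to a single application of $T^{-1}\sigma\langle\,\cdot\,\rangle$ to a twice-averaged quantity, and the key fact that $\langle J\psi_h\rangle$ is of size $O(h^{1-n})$ supported on an $O(h)$-tube about the ray $\{x_0+t\theta_0\}$ --- but it organizes the estimate differently. The paper bounds $K^2J\psi_h$ term by term: it first estimates $KJ\psi_h(x,\theta)$ pointwise by splitting into $|\theta-\theta_0|\gg h$ (where the line through $x$ in direction $\theta$ meets the tube in a short segment) versus $|\theta-\theta_0|\lesssim h$, then averages over angles to get $\langle KJ\psi_h\rangle=O(h^{2-n})$, applies \eqref{TInverseBound}, and finally disposes of $K^3J\psi_h+\cdots$ using \eqref{KBound} geometrically. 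You instead pass to spatial variables via $z=x-t\theta$, turning the angularly averaged scattering operator into the Newtonian-type operator $P$ with kernel $\alpha(x,z)\sigma(z)|x-z|^{1-n}$, and prove the single potential estimate $\sup_x\int_{\mathcal T}|x-z|^{1-n}\,dz\lesssim h$ on the tube; the resolvent identity $\langle Ku\rangle=(I-P)^{-1}Pg$ then absorbs the entire tail in one step. The two computations are equivalent under the polar change of variables (your split at $|s-s_x|=h$ corresponds to the paper's split in $|\theta-\theta_0|$, and $n\ge 3$ enters in the same place in both), but your spatial formulation is arguably cleaner: the paper's intermediate claim that the intersection length is $O(h)$ whenever $|\theta-\theta_0|\gg h$ is really $O(h/|\theta-\theta_0|)$ for intermediate angles, a harmless imprecision that washes out in the angular average but which your tube integral never has to confront. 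The identities $R=K^2u$ and $Pf=\langle K\tilde f\rangle$ (hence $\|P\|_{L^\infty\to L^\infty}\le C<1$) are both verified correctly, so the argument closes.
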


As we shall see below, the terms $J\psi_h$ and $K\psi_h$ can be $\sim h^{2-2n}$ and $\sim h^{1-n}$ respectively, so for small $h$, $R$ is genuinely nonsingular compared to the first two terms.  

\begin{proof}
By Proposition \ref{CollisionExpansion}, $R$ takes the form 
\begin{equation}\label{RExplicit}
R = K^2J\psi_h + K^3 J\psi_h + \ldots
\end{equation}
Therefore \eqref{KBound} implies that it suffices to show that 
\[
\|K^2J\psi_h\|_{L^{\infty}(\Om \times S^{n-1})} = O(h^{2-n}).
\]

To show this, we begin by establishing some estimates for $J\psi_h$ and $KJ\psi_h$. Explicitly,
\[
J\psi_h(x,\theta) = \alpha(x, x_{\theta-})\psi_h(x_{\theta-}, \theta) = \alpha(x, x_{\theta-})X_h(x_{\theta-})\Theta_h(\theta).
\]
$\Theta_h$ is only supported for $|\theta - \theta_0| \leq h$ and has total integral 1, so averaging over angles gives us
\[
\langle J\psi_h \rangle(x) \leq \sup_{|\theta - \theta_0|\leq h} |\alpha(x, x_{\theta-})X_h(x_{\theta-})| = O(h^{1-n}).
\]
Now $X_h$ is supported only for $|x - x_0| < h$, so $X_h(x_{\theta-})$ is nonzero only for $x$ which can be reached from this neighbourhood by traveling in direction $\theta$. If $|\theta - \theta_0| \leq h$, this means that $X_h(x_{\theta-})$ and $J\psi_h(x)$ are supported only for $x$  within $O(h)$ distance of the line $\{x_0 + t\theta_0\}$.  Then
\[
KJ\psi_h(x,\theta) = T^{-1}\sigma \langle J\psi_h \rangle,
\]
where the integral in the operator $T^{-1}$ is taken over the line through $x$ in direction $\theta$.  For $|\theta - \theta_0| \gg h$, the intersection of this line with the support of $J\psi_h$ is at most length $O(h)$, so 
\[
KJ\psi_h(x,\theta) \leq O(h)\|\langle J\psi_h \rangle\|_{L^{\infty}(\Om \times S^{n-1})} \leq O(h^{2-n})
\]
for $|\theta - \theta_0| \gg h$, and $KJ\psi_h(x,\theta) \leq O(h^{1-n})$ otherwise.  Integrating over angles now gives
\[
\langle KJ\psi_h(x,\theta) \rangle \leq O(h^{2-n}).
\]
The estimate \eqref{TInverseBound} gives us 
\[
K^2 J\psi_h(x,\theta) \leq O(h^{2-n}),
\]
which finishes the proof.

\end{proof}

\begin{proof}[Proof of Theorem \ref{MainThm}]

Fix $(x, \theta) \in E_-$. Define $\psi_h$ as above with $(x_0,\theta_0) = (x,\theta)$, and suppose $u$ solves \eqref{SingularSolution}.
Note that $(x,-\theta) \in E_+$, so $u(x, -\theta) = \mathcal{A}(\psi_h)(x,-\theta)$ is known.  

By Proposition \ref{SmallRemainder}, 
\[
u(x, -\theta) = J\psi_h(x, -\theta) + KJ\psi_h(x, -\theta) + R(x, -\theta),
\]
where $R(x, -\theta) = O(h^{2-n})$.  Moreover
\[
J\psi_h(x, -\theta) = \alpha(x, x_{\theta+})\psi_h(x_{\theta+}, -\theta) = 0
\]
since $\psi_h$ is only supported in a small neighbourhood of $(x, \theta)$.  Therefore
\[
u(x, -\theta) = KJ\psi_h(x, -\theta) + O(h^{2-n}).
\]

Now 
\begin{equation}\label{KJPsi}
KJ\psi_h(x, -\theta) = \int_0^{\|x - x_{\theta+}\|} \alpha(x, x + t\theta)\sigma(x + t \theta)\langle J\psi_h \rangle(x + t\theta)dt. 
\end{equation}
Along the line $\{x + t\theta\}$, 
\[
J\psi_h(x + t\theta,\theta) = \alpha(x + t\theta, (x + t\theta)_{\theta-})\psi_h((x + t\theta)_{\theta-}, \theta)
\]
Recall that $\psi_h(x,\theta) = X_h(x)\Theta_h(\theta)$. Then \eqref{SeparatedApproximations} implies that 
\[
\langle J\psi_h  \rangle(x + t\theta) = \alpha(x + t\theta, (x + t\theta)_{\theta-})X_h((x + t\theta)_{\theta-})+ o(h^{1-n}).
\]
Now $(x + t\theta)_{\theta-} = x$ and without loss of generality we can choose $X_h$ to be normalized so $X_h(x) = h^{1-n}$. Therefore we can write this as
\[
\langle J\psi_h \rangle(x + t\theta) = \alpha(x + t\theta, x)h^{1-n}+o(h^{1-n}).
\]
Substituting into \eqref{KJPsi}, we get
\[
KJ\psi_h(x, -\theta) = h^{1-n}\int_0^{\|x - x_{\theta+}\|} \alpha^2(x, x + t\theta)\sigma(x + t \theta)dt + o(h^{1-n}).
\]
Multiplying by $h^{n-1}$, we have
\[
h^{n-1}u(x, -\theta) = \int_0^{\|x - x_{\theta+}\|} \alpha^2(x, x + t\theta)\sigma(x + t \theta)dt + o_h(1).
\]
Now notice that \eqref{OpticalDistanceDerivative} implies that
\[
\alpha^2(x, x + t\theta)\sigma(x + t \theta) = -\half \partial_t \alpha^2(x, x + t\theta),
\]
so
\[
2h^{n-1}u(x, -\theta) = \alpha^2(x,x) - \alpha^2(x, x + \|x - x_{\theta+}\|\theta) + o_h(1).
\]
Since $\alpha(x, x) = 1$ and $x + \|x - x_{\theta+}\|\theta = x_{\theta+}$, we have
\begin{equation}\label{BoundaryInterpretation}
2h^{n-1}u(x, -\theta) = 1-\alpha^2(x, x_{\theta+}) + o_h(1)
\end{equation}
Taking the limit as $h \rightarrow 0$, we can see (no pun intended) that the quantity $\alpha^2(x, x_{\theta+})$ is determined by $\mathcal{A}|_E$, for any $(x, \theta) \in E_-$.
Now
\begin{equation}\label{LineIntegral}
\alpha^2(x, x_{\theta+}) = \exp\left(- 2\int_0^{\|x-x_{\theta+}\|}\sigma(x + s\theta) \, ds\right)
\end{equation}
so taking logarithms and dividing by $-2$ gives us the integral of $\sigma$ over the line from $x$ to $x_{\theta+}$. By varying $(x, \theta)$ within $E_-$, we recover the integral of $\sigma$ over every line segment in $\bar{\Om}$ which intersects $E$. The scannability condition on $E$ now guarantees that this information determines $\sigma$.  

\end{proof}

To prove Corollary \ref{ConvexCor}, note first that if $\Om$ is strictly convex, we can rewrite Definition \ref{Scannability} to say that $\Om$ can be scanned from $E$ if for $\ph \in C(\Om)$,
\[
T(\ph)(\ell) = 0 \mbox{ for each line } \ell \mbox{ intersecting } E \mbox{ implies that } \ph \equiv 0.
\]
(Here $T$ is the usual X-ray transform on $\R^n$.) Now we can make use of the following proposition, which follows directly from Theorem 5.1 in \cite{HamSmiSolWag} (see also similar theorems in \cite{Glo} and \cite{BomQui}).

\begin{prop}\label{XRayInversion}
Suppose $\ph \in L^1(\Om)$ and $V$ is an open set disjoint from the closure of the convex hull of $\Om$.  If $T(\ph)(\ell) = 0$ for all lines $\ell$ intersecting $V$ then $\ph \equiv 0$.
\end{prop}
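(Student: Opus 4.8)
The plan is to reduce the statement to a known uniqueness theorem for the divergent beam (cone beam) transform, namely Theorem 5.1 of \cite{HamSmiSolWag}, so that the only genuine work on our side is the passage from line integrals to ray integrals. First I would extend $\ph$ by zero to all of $\R^n$, obtaining a compactly supported $L^1$ function whose support lies in the closed convex hull $K$ of $\Om$. Since $V$ is disjoint from $K$, every source point $a \in V$ lies outside this convex set.

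The key step is to observe that for such $a$ the hypothesis on line integrals is equivalent to the vanishing of the divergent beam transform based at $a$. Fix $a \in V$ and a direction $\theta \in S^{n-1}$. The full line through $a$ in direction $\theta$ meets $V$, so by hypothesis $\int_{\R} \ph(a + t\theta)\, dt = 0$. Because $K$ is convex and $a \notin K$, the set $\{ t : a + t\theta \in K\}$ is an interval not containing $0$, hence lies entirely in $(0,\infty)$ or in $(-\infty, 0)$. Consequently only one of the two rays from $a$ carries any mass of $\ph$, and the line integral collapses to a single ray integral. Writing $D_a \ph(\theta) = \int_0^\infty \ph(a + t\theta)\, dt$ for the divergent beam transform, this shows that $D_a \ph \equiv 0$ on $S^{n-1}$ for every $a \in V$.

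With the data now expressed as the vanishing of $D_a \ph$ for $a$ ranging over the open set $V$, disjoint from the convex hull of the support, I would invoke Theorem 5.1 of \cite{HamSmiSolWag}. The remaining point to verify is the completeness hypothesis on the source set, and an open set comfortably satisfies it: $V$ is not contained in any proper real-algebraic subvariety, and in particular contains source arcs running in every direction. The cited theorem then yields $\ph \equiv 0$, as desired; the same conclusion is available through the related results of \cite{Glo} and \cite{BomQui}.

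I expect the real obstacle to be nothing on our side but rather the content packaged inside the cited theorem, namely the completeness of an open source set. A self-contained argument would have to proceed either by differentiating $D_a \ph$ in the source variable $a$ to generate enough angular data to feed into a Helgason-type support theorem, or by running the analytic–microlocal argument of Boman–Quinto, whereby the X-ray transform restricted to an open family of lines detects the analytic wavefront set and, combined with the analyticity properties of the transform, forces $\ph$ to vanish. Since this completeness is precisely the substance of \cite{HamSmiSolWag}, the efficient path, and the one I would take, is to perform the reduction above and then quote the theorem.
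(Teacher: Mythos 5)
Your proposal matches the paper's treatment: the paper gives no independent argument, stating only that the proposition ``follows directly from Theorem 5.1 in \cite{HamSmiSolWag},'' which is precisely the reduction-plus-citation you carry out. Your explicit observation that a line from a source outside the closed convex hull meets the support on only one side, so the vanishing line integrals collapse to vanishing divergent-beam integrals $D_a\ph$ for all $a \in V$, correctly supplies the ``directly'' in that claim.
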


%\begin{prop}\label{XRayInversion}
%Suppose $K$ is a compact set, and let $P$ be an open connected set of lines in $\ell(\R^n)$, with at least one $\ell \in P$ such that $\ell \cap K = \varnothing$, and $K \subset \cup_{\ell \in P}\ell$. Then if $\ph \in C_0(K)$, and $T(\ph)(\ell) = 0$ for all $\ell \in P$, then $\ph \equiv 0$.
%\end{prop}

%\begin{prop}\label{XRayInversion}
%Suppose $\ph$ is a continuous function with support in a compact set $K$, and $U$ is a ball in the unbounded component of $\R^n \setminus K$.  If $T(\ph)(\ell) = 0$ for every  $\ell$ intersecting $U$ and $K$, then $\ph \equiv 0$.  
%\end{prop}

\begin{proof}[Proof of Corollary \ref{ConvexCor}]

Given Proposition \ref{XRayInversion}, it suffices to find an open set $V$ outside $\bar{\Om}$ such that all lines through $V$ which intersect $\Om$ intersect $E$.

To do this, pick a point $p \in E$ and choose coordinates $(x_1, \ldots x_{n-1},y)$ on $\R^n$ so that $p$ lies at the origin, the tangent plane to $\Om$ at $p$ has equation $y = 0$, and $\Om$ lies in the upper half space $\{y > 0\}$. Locally around the origin, $\partial \Om$ coincides with the graph $\{y = f(x)\}$ of a nonnegative strictly convex function $f$, with $\Om \subset \{y \geq f\}$.

Since $f$ is strictly convex, for any compact set $K$ in the lower half space $\{ y < 0\}$, there exists a compact set $A \subset S^{n-1}$ excluding directions parallel to the plane $\{y=0\}$, such that only lines in directions inside $A$ can reach $\bar{\Om}$ from $K$.

Now let $B(0,r) \subset \R^n$ represent the ball of radius $r$ around the origin. The implicit function theorem guarantees the existence of a continuous function $g$ defined on $A \times B(0,r)$, for some $r$, such that $g(\theta, q)$ represents the point of intersection of the the graph of $f$ and the line through $q$ in direction $\theta$. Since $A$ is compact and $g(\theta,0) = 0$, it follows that for any $\e > 0$ there exists a $\delta > 0$ such that all lines through the ball $B(0,\delta)$ in directions in $A$ must pass through the graph of $f$ within a distance $\e$ of the origin. If $\e$ is small enough, this guarantees that all lines through $B(0,\delta)$ in directions contained in $A$ which intersect $\bar{\Om}$ intersect $E$.  Taking $V$ to be the interior of a ball compactly contained in $B(0,\delta) \cap \{y < 0\}$ finishes the proof. 

\end{proof}

\section{On Conjecture \ref{LocalConjecture}}

The final section of this paper is devoted to discussion of Conjecture \ref{LocalConjecture}. The major problem in proving the conjecture is that most approaches to the transport problem concentrate on highly singular ballistic parts to solutions of the RTE, which are supported along straight lines. The method described in the present paper is only a slight improvement, since it also relies on portions of the solution concentrated along these lines.  However, if there is a part of $\Om$ which cannot be reached from $E$ in a straight line through $\Om$, then a positive answer to this conjecture would require one to use the truly non-ballistic parts of solutions to ``see'' the hidden parts of $\Om$ from $E$.  

Some indication that this is possible might be inferred from the recent result of \cite{LaiLiUhl} (see also \cite{ChuLaiLi}), which analyzes the stability of the standard optical tomography problem of \cite{ChoSte} in the limit as $\sigma$ becomes large.  In this case the ballistic parts of the solutions decay exponentially and the stability of the problem goes to zero. Yet in the limit of large $\sigma$, one expects the solutions to the RTE to converge to solutions to a diffusion equation (see \cite{BenLioPap} and similar papers), and consequently one expects the optical tomography problem described above to turn into the Calder\'on problem (see \cite{LiNewStu} for one sense in which this holds). As the Calder\'on problem is solvable \cite{SylUhl}, this seems to imply that the non-ballistic terms of the solution to the RTE do convey useable information. Of course, the \emph{local} version of the Calder\'on problem remains open in general \cite{KenSal}, so not much is guaranteed. 

Note that the claim that the non-ballistic terms of the solution to the RTE convey usable information implies that seeing may not be restricted to straight lines. Indeed, if Conjecture \ref{LocalConjecture} holds, then it would imply that we can \emph{see around corners} in the presence of scattering (!). This is difficult to imagine but the above discussion seems to indicate that it might be possible.  In light of this interpretation (no pun intended), it would seem that Conjecture \ref{LocalConjecture} is worth some effort.

\end{document}